\providecommand{\U}[1]{\protect\rule{.1in}{.1in}}
\newtheorem{theorem}{Theorem}[section]
\newtheorem{lemma}[theorem]{Lemma}
\newtheorem{definition}[theorem]{Definition}
\newtheorem{remark}[theorem]{Remark}
\numberwithin{equation}{section}
\newtheorem{conjecture}[theorem]{Conjecture}
\newtheorem{corollary}[theorem]{Corollary}
\newtheorem{proposition}[theorem]{Proposition}
\newtheorem{simplification}[theorem]{Simplification}
\newenvironment{proof}{{\it Proof.}}{\hfill$\blacksquare$}
\newcommand{\FR}[1]{#1}
\renewcommand{\Re}{\mathrm{Re}}
\newcommand{\R}{\mathbb{R}}
\newcommand{\eps}{\varepsilon}
\begin{document}

\title{A universal gap for non-spin quantum control systems\thanks{This research is partially supported by the Fondazione Cariparo ``Visiting Scientist'' Program 2018, the Padua University grant SID
2018 ``Controllability, stabilizability and infimum gaps for control
systems'', prot. BIRD 187147 and by the Grant ANR-15-CE40-0018 of the
ANR}} 

\author{Jean-Paul Gauthier\thanks{Universit\'{e} de Toulon, LIS, UMR\ CNRS 7020, Campus de La Garde, 83041, Toulon CEDEX 9, France, {\tt gauthier@univ-tln.fr}}, Francesco Rossi\thanks{Dipartimento di Matematica ``Tullio Levi-Civita'', Universit\`a\  di Padova, Italy, {\tt francesco.rossi@math.unipd.it}}
}

\maketitle

\bibliographystyle{amsplain}

\begin{abstract}                
We prove the existence of a universal gap for minimum time controllability of finite dimensional quantum systems, except for some basic representations of spin groups.

This is equivalent to the existence of a gap in the diameter of orbit spaces of the corresponding compact connected Lie group unitary actions on the Hermitian spheres.
\end{abstract}

{\bf MSC 2020}: 81R99, 22D10, 15B99.

{\bf Keywords}: Geometric control, Quantum control, Representations of compact Lie groups.

\section{Introduction}

In this article, we consider finite dimensional quantum control systems \eqref{quantum}, i.e.:%
\begin{align}
\dot{x}  &  =Ax+\sum_{i=1}^{p}B_{i}x\text{ }u_{i}\tag{$\Sigma$}\label{quantum}
\end{align}
where $x\in\mathbb{C}^{n}$, $u_{i}\in\mathbb{R}$ and $  A,B_{i}$ are skew-adjoint matrices.

In several relevant applications in quantum control, the drift $A$ represents the ($n$-dimensional) Schr\"{o}dinger dynamics of the system and the $B_{i}$'s are the laser controls. See examples in  \cite{albertini,BGRS,mirrahimi,rouchon}.

  The unit of time is the ``time of the drift'', i.e.\ we assume that
$||A||=1$, where the norm is the standard matrix norm associated with the
Hermitian norm over $\mathbb{C}^{n}$, i.e. $\|X\|=\sqrt{<x,x>}$ with $<x,y>=x'\bar y$.

In quantum dynamics, the state $x$ evolves on the \textbf{unit} (real) sphere
$S^{2n-1}\subset\mathbb{C}^{n}.$ The minimum time $T(\Sigma)$ of the \ system
$(\Sigma)$ is defined as the supremum of the minimum times necessary
to connect two points of the unit sphere $S^{2n-1}$ with trajectories of
$(\Sigma)$, corresponding to arbitrary $L^{\infty}$ controls $u_{i}(t)$. In other terms, 

\begin{eqnarray}
T(\Sigma)&=&\sup_{X,Y\in S^{2n-1}}\inf \{T\geq 0\mbox{~~s.t.~~} \label{e-TS} \gamma(0)=X,\ \gamma(T)=Y, \gamma \mbox{ trajectory of }\Sigma\}.
\end{eqnarray}

If the system \eqref{quantum} without drift is controllable, it is clear that $T(\Sigma)=0$. Othewise, one needs to take into account the time of the drift to ensure controllability of the system. A conjecture, due to Andrei Agrachev \FR{(oral communication)}, is the following:

\begin{conjecture} \label{conj}
There exists a universal gap $\delta>0$ for the minimum time, i.e. whatever the
dimension $n,$ and whatever the quantum system $(\Sigma),$ either
$T(\Sigma)=0,$ or $T(\Sigma)\geq\delta.$
\end{conjecture}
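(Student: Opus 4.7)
The plan is to translate Conjecture \ref{conj} into a purely geometric statement about orbit spaces of compact Lie group actions on Hermitian spheres, and then to use the classification of compact Lie group representations together with a spectral analysis of the drift to establish a universal lower bound on $T(\Sigma)$ whenever it is non-zero.

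First, I would set up the reduction. If $T(\Sigma)>0$, the Lie algebra $L$ generated by $\{B_1,\ldots,B_p\}$ is a proper subalgebra of $u(n)$, so its closed exponential image $G:=\overline{\exp(L)}$ is a proper compact connected subgroup of $U(n)$. Passing to the impulsive closure of trajectories (allowing the controls $u_i$ to be arbitrarily large), motion along $G$-orbits becomes instantaneous while crossing between orbits costs real time governed by the drift flow $e^{tA}$. Since $\|A\|=1$, the minimum time $T(\Sigma)$ in this impulsive limit is controlled from below by the round Riemannian diameter of the quotient $S^{2n-1}/G$, weighted by the projection of $A$ onto the orbit-transverse directions.

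Second, I would reduce to irreducible unitary representations. The $G$-isotypic decomposition of $\C^n$, combined with the fact that the diameter of a product orbit space dominates the maximum diameter of any factor, shows that it suffices to lower-bound the orbit-space diameter for every non-transitive irreducible unitary representation of a compact connected semisimple Lie group. The Cartan--Killing classification then organizes this task into finitely many infinite families (the classical groups $SU(n)$, $SO(n)$, $Sp(n)$) and the five exceptional groups. Family by family, I would derive lower bounds on $T(\Sigma)$ via explicit root-system data --- for instance, via the minimum pairing of a coroot with the highest weight --- yielding a uniform gap $\delta>0$ for the defining, adjoint, and low-weight representations of the classical families and for all representations of the exceptional groups.

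The principal obstacle, and the delicate step, is the spin and half-spin families of $Spin(m)$, whose naive orbit-space diameters on $\C^{2^{\lfloor m/2\rfloor}}$ shrink to zero as $m$ grows. To preserve the universal gap I would refine the first reduction by using the full structure of $A$ rather than its worst-case direction: expanding $A$ in the weight basis of a maximal torus $T\subset G$, the skew-adjointness of $A$ together with $\|A\|=1$ constrains its components along the short quotient-geodesic directions. I would attempt to prove that a universal fraction of $\|A\|$ must lie along directions whose corresponding quotient-geodesic length is bounded below by an absolute constant. Such a spectral-geometric inequality --- replacing the crude bound $T(\Sigma)\geq\mathrm{diam}(S^{2n-1}/G)$ by a sharper estimate --- would close the spin case and yield the universal gap. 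Establishing it for the spin families, where the weight structure exhibits subtle cancellations, is where the argument requires genuinely new input, most plausibly via Clifford-algebraic identities and careful control of $Spin(m)$-equivariant operators on the Hermitian sphere.
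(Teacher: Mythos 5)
Your overall strategy --- converting the minimum-time question into a lower bound on the diameter of the orbit space of the compact connected group $G$ generated by the controls, then reducing via isotypic/irreducible decomposition and the classification of compact simple groups to a finite case analysis of basic representations --- is exactly the route the paper takes (the time-to-diameter link is inequality \eqref{e-ineq}, and the reductions are Simplifications 3.1--3.8). But note what the statement you are asked to prove actually is: Conjecture \ref{conj} is stated in the paper as an \emph{open} conjecture. The paper proves only Theorem \ref{t-main}, i.e.\ the diameter gap for all non-BS-actions, and hence the time gap only for systems whose control group does not act by a basic spin (or half-spin) representation of $Spin(m)$ tensored with a character. Your proposal follows the same path as the paper for the non-spin cases, but your treatment of those cases is also only a sketch: the paper obtains the uniform bound $M(G)\leq 1/\sqrt{2}$ for $A_n$, $C_n$, tensor products and Cartan products by exhibiting explicit test vectors (Lemmas \ref{l-tens}, \ref{l-k2}, \ref{l-fuchs}), and handles $B_n$, $D_n$ non-spin and the adjoint/real cases via the reducibility bound $D(G)\geq\sqrt2$ (Proposition \ref{p-reduc}); your appeal to ``root-system data'' and ``minimum pairing of a coroot with the highest weight'' is not an argument yielding a uniform constant.

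The genuine gap is the spin case, which you yourself flag as requiring ``genuinely new input.'' That is precisely the part of the conjecture that remains unproven in the paper (and is excluded from Theorem \ref{t-main} as the BS-actions), so your proposal does not prove the statement. Moreover, the fix you outline is doubtful as stated: the conjectured gap must hold for \emph{every} drift $A$ with $\|A\|=1$, so your ``spectral-geometric inequality'' would have to assert that for every unit-norm skew-adjoint $A$ a universal fraction of $A$ points along orbit-transverse directions of quotient-length bounded below by an absolute constant. If the orbit space $S^{2n-1}/G$ had uniformly small diameter (which is exactly the scenario you cannot exclude for the spin families as $m\to\infty$), no such direction exists and the inequality is vacuous or false; nothing in your outline rules this out, and no Clifford-algebraic estimate is actually produced. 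A smaller quibble: compactness of $G$ is not automatic from taking closures in your setup --- the paper instead uses irreducibility of the inclusion plus Weil's theorem (Theorem \ref{t-dix}) to conclude $G$ is compact, a step your reduction should make explicit. In summary, your proposal reproduces the paper's proof of the weaker Theorem \ref{t-main} in outline form, but the statement under review is the full conjecture, and the missing spin case is not closed by the argument you propose.
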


For more details, see Section \ref{s-andrei}. Our goal in this article is to prove a first result towards the complete proof of this conjecture. Before stating the main theorem, we define the diameter of orbit spaces.

Let $G$ be any \textbf{connected} Lie subgroup of $U(n),$ the unitary group
over $\mathbb{C}^{n}.$ The diameter $D(G)$ is the maximum distance in
$\mathbb{C}^{n}$ of two $G$-orbits in $S^{2n-1}$, i.e.$$D(G):=\sup_{X,Y\in S^{2n-1}}\inf_{g,h\in G}\|gX-hY\|.$$

Our result shows that $D(G)$ is universally bounded from below, except for transitive actions (for which $D(G)=0$) and in the following cases, that we call the {\bf BS-actions}.
\FR{\begin{definition}
The action $(G,\hookrightarrow)$ on $S^{2n-1}$ is a basic spinor action (BS-action for short) if some $\tilde G$ is a covering of $G$ with covering homomorphism $\Pi$ and satisfies:
\begin{enumerate}
\item $\tilde G=T\times H$ where $T$ is a torus and $H =Spin(m)$ is a spin group for some $m$;
\item $\tilde \Phi=\Phi\circ \Pi$ is the tensor product representation $\tilde \Phi=\chi\otimes \Phi'$, where $\chi$ is a character of $T$ and $\Phi'$ is a basic spin representation of $H$.
\end {enumerate}
\end{definition}

Following the terminology by Dynkin from his paper \cite{dynkin}, we call basic representations the irreducible representations that are not Cartan products.

We recall that the basic spin representations of $Spin(m)$ are the basic irreducible representations of this group that are not representations of $SO(m)$. If $m=2k+1$, there is a unique such representation and it has dimension $2^k$. If $m=2k$, there are two representations, usually called the half-spin representations, both of dimension $2^{k-1}$. See e.g. \cite[p. 312]{goodman}.}

The main result of the article is the following.
\begin{theorem}
\label{t-main}There is a universal gap $\delta^{\prime}>0$ for the diameter of unitary actions, except for BS-actions,  i.e.: for any Lie group $G$ which action on $S^{2n-1}$ is not a BS-action, it holds
either $D(G)=0$ or $D(G)\geq\delta^{\prime}$.
\end{theorem}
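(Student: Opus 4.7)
The plan is to combine two reductions with a classification-based case analysis.

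First, since $D(G)$ depends only on the $G$-orbits on $S^{2n-1}$, I may pass to the closure of $G$ in $U(n)$ and assume $G$ is compact and connected. If the unitary representation of $G$ on $\mathbb{C}^n$ is reducible, write $\mathbb{C}^n = V \oplus V^\perp$ with both factors $G$-invariant, and pick unit vectors $X \in V$, $Y \in V^\perp$; then $gX \perp hY$ for all $g,h \in G$, so $\|gX - hY\| = \sqrt{2}$ and $D(G) \geq \sqrt{2}$. Hence I may assume the action is irreducible. Passing to the universal cover $\tilde G = T \times H$ with $T$ a torus and $H$ semisimple simply connected, any irreducible unitary representation of $\tilde G$ decomposes as $\chi \otimes \Phi'$, with $\chi$ a character of $T$ acting on $\mathbb{C}^n$ by scalars (hence invisible to $G$-orbits on $S^{2n-1}$) and $\Phi'$ an irreducible representation of $H$. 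Thus $D(G)$ depends only on $\Phi'$, and the role of $\chi$ is precisely what singles out the BS-actions.

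Next, if the action of $H$ on $S^{2n-1}$ is transitive then $D(G)=0$. By the Montgomery--Samelson/Borel classification of compact connected transitive actions on spheres, the relevant unitary cases are explicit: the standard representation of $SU(n)$ on $S^{2n-1}$, the standard representation of $Sp(k)$ and its $U(1)$- and $Sp(1)$-extensions on $S^{4k-1}$, and the spin representations of $Spin(7)$ and $Spin(9)$ on $S^{7}$ and $S^{15}$. So I may now assume $\Phi'$ is irreducible and non-transitive.

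By Dynkin's classification, $\Phi'$ is either basic (a fundamental representation of a simple factor of $H$) or a non-trivial Cartan product of basic representations, and I would split the analysis accordingly. If $\Phi'$ is a Cartan product of $\Phi_1$ and $\Phi_2$, the highest weight vector is $v_1 \otimes v_2$ inside $\Phi_1 \otimes \Phi_2$ and the closed orbit is the Segre-type image of a generalized flag variety; a careful choice of two extremal unit vectors, together with weight-lattice estimates, should yield a lower bound $D \geq \delta'$ uniform in the factors. If $\Phi'$ is basic, I would enumerate the fundamental representations of each simple compact Lie group and verify case by case that either the action is transitive or $D \geq \delta'$; the family that resists such a uniform bound is precisely the basic spin representations of $Spin(m)$, which yield the BS-actions.

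The main obstacle is the Cartan-product case: showing that $\delta'$ can be chosen independently of the rank of $H$, the number of factors appearing in the Cartan product, and the specific basic representations involved. Because the highest weight can be pushed arbitrarily deep into the dominant chamber, the closed orbit and its stabilizer vary widely, and the two extremal vectors one chooses must remain ``spread out'' uniformly --- for instance via an argument based on the Cartan involution, on isotropy-group dimensions, or on the convex geometry of weight polytopes. A secondary but still nontrivial obstacle is the basic case for the half-spin representations of $Spin(2k)$ and for small fundamental representations of the five exceptional groups, where an explicit estimate of $D$ must be produced representation by representation to confirm that these are not additional exceptions beyond the BS family.
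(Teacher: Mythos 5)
Your proposal reproduces the paper's opening reductions (reducibility gives $D\geq\sqrt2$ via orthogonal invariant subspaces; pass to a compact connected group; split the universal cover as $T\times H$ and discard the character of the torus; transitive actions give $D=0$), but it stops short of a proof exactly where the real work lies, and you say so yourself: you have no argument making the Cartan-product case uniform in the rank of $H$, the number of factors, and the representations involved, and no uniform argument for the infinite families of basic representations of the classical groups (you propose to ``verify case by case'' what is in fact an infinite list). In the paper both gaps are closed by one elementary tensor estimate: if $X$ is a decomposable unit vector and $Y=\tfrac{1}{\sqrt2}(v_1\otimes w_1+v_2\otimes w_2)$ with $v_1\perp v_2$, $w_1\perp w_2$, then $|\langle \Phi(g)X,Y\rangle|\leq 1/\sqrt2$ for every $g$, because $\Phi(g)X$ stays decomposable. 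For a Cartan product $\Phi$ of $\Phi_1,\Phi_2$ one takes $X=h_1\otimes h_2$ (highest weight vectors) and $Y=\tfrac1{\sqrt2}(h_1\otimes h_2+l_1\otimes l_2)$ (lowest weight vectors), using the opposition element of the Weyl group to check that $l_1\otimes l_2$ lies in the Cartan-product component; this gives $M(G)\leq 1/\sqrt2$, hence $D(G)\geq\sqrt{2-\sqrt2}$, with no dependence on rank or weights whatsoever --- the ``convex geometry of weight polytopes'' machinery you anticipate is not needed. The same trick handles products $G_1\times G_2$, and a cousin of it (a determinant estimate for two disjoint wedge vectors $e_1\wedge\dots\wedge e_k$ and $e_{k+1}\wedge\dots\wedge e_{2k}$) handles the exterior-power basic representations of $A_n$ and the highest-weight components for $C_n$; the basic non-spin representations of $B_n,D_n$ are real, so the reducibility bound $\sqrt2$ applies; the exceptional groups contribute only finitely many cases, so compactness of the list gives a uniform $\lambda'<1$.

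A secondary error: you treat the half-spin representations of $Spin(2k)$ as cases still needing explicit diameter estimates ``to confirm that these are not additional exceptions.'' They \emph{are} the exceptions: by the paper's definition the basic spin representations of $Spin(m)$ (the unique one for $m$ odd, the two half-spin ones for $m$ even) are exactly what generates the BS-actions excluded by the hypothesis of the theorem, so nothing is to be proved for them. Also, the character $\chi$ of the torus does not ``single out'' the BS-actions --- it acts by scalars and is irrelevant to $D$; the BS family is determined by $\Phi'$ alone. As it stands, your text is a plan whose acknowledged main obstacle is precisely the step the theorem requires, so it does not constitute a proof.
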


The connection between Agrachev's conjecture and Theorem \ref{t-main} is described in Section \ref{s-andrei}. A similar conjecture relative to isometric group actions on spheres
is proposed in \cite{gorod3}. Other related references are
\cite{dgms,gorod1,gorod2,green,greenwald,mcgow}. There is also a preprint with a proof of a result that is stronger than Theorem \ref{t-main}, but with a completely different and much longer proof. See \cite{gorodski2019}.

\FR{Another series of results can be found in \cite{green,greenwald}. In particular for closed (finite or infinite) groups,  a lower bound $\eps(n)$ is found in \cite{greenwald}, which depends on the dimension $n$.  Recently, in \cite{green}, the lower bound $\sqrt{2}-o(n)$ is proven for finite isometric real group actions. But this does not impact on the problem for infinite groups, see \cite[Note 3]{green}.

\begin{remark}
In all the papers just cited, some lower bounds are smaller than the bound $\sqrt{2-\sqrt{2}}$ that we compute below.  There is no contradiction here: we consider complex unitary actions only.
 \end{remark}}

We also provide a simple estimate of the constants $\delta$ and $\delta'$ in the following.
\begin{corollary} \label{c-racine2}
If the dimension $n$ of \eqref{quantum} is not of the form $2^k$ nor in the set\footnote{We have two basic representations of $E_6$ in each of the dimension 27 and 351.} 
$$\mathcal{E}:=\{1274, 273, 26, 2925, 27, 351, 912, 56, 1539, 27664, 365750, 8645,$$
$$147250, 72912, 24502400, 5121384450, 87587590464, 6696000, 3875\},
$$
then it holds either $D(G)=T(\Sigma)=0$ or $$D(G)\geq \sqrt{2-\sqrt{2}},\qquad \sin\left(\frac{T(\Sigma)}{2}\right)\geq \frac{\sqrt{2-\sqrt{2}}}2.$$
\end{corollary}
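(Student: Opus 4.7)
The plan is to combine Theorem~\ref{t-main} with two ingredients: a classification-based identification of exactly which basic representations can have diameter below $\sqrt{2-\sqrt{2}}$, and the standard passage between chord distance on the sphere and minimum-time for a drifted system.

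First, BS-actions are easy to rule out dimensionally. Any BS-action factors through $T\times Spin(m)$ with $\tilde\Phi = \chi\otimes \Phi'$, and the basic spin representations have dimension $2^{\lfloor (m-1)/2\rfloor}$, while $\chi$ is one-dimensional. Hence every BS-action forces $n$ to be a power of two, and restricting to $n\notin\{2^k\}_{k\geq 0}$ already eliminates all exceptional actions of Theorem~\ref{t-main}, leaving us with some (universal) positive gap on $D(G)$.

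Second, to pin down the explicit constant $\sqrt{2-\sqrt{2}}=2\sin(\pi/8)$, I would inspect the case analysis underlying Theorem~\ref{t-main}, organized along Dynkin's classification of basic (non-Cartan-product) irreducible representations of compact connected simple Lie groups. For every infinite family (the standard and fundamental representations of the classical series $SU$, $SO$, $Sp$, and their tensorings with torus characters), a direct computation using a highest-weight vector against a suitable element of its $G$-orbit should give $D(G)\geq 2\sin(\pi/8)$, i.e.\ a worst-case geodesic angle of at least $\pi/4$ between orbits. The only places where this clean bound may fail are among the finitely many basic representations of the exceptional groups $G_2$, $F_4$, $E_6$, $E_7$, $E_8$, whose dimensions form precisely the list $\mathcal{E}$; for those, Theorem~\ref{t-main} still yields $D(G)>0$, but possibly below $\sqrt{2-\sqrt{2}}$. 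Excluding $n\in\mathcal{E}$ therefore forces the clean bound to apply.

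Finally, to convert the $D(G)$-bound into the $T(\Sigma)$-bound, I would invoke the connection discussed in Section~\ref{s-andrei}: the drift flows $x\mapsto e^{tA}x$ are unit-speed geodesics of $S^{2n-1}$ (since $\|A\|=1$), while the $B_i$'s generate the tangent action of $G$. In the strong-control limit, the minimum time between $X$ and $Y$ becomes the geodesic distance between $GX$ and $GY$, so that $T(\Sigma)$ is an upper bound for the geodesic diameter of the quotient $G\backslash S^{2n-1}$. Because the chord and geodesic distances on the unit sphere are monotonically related by $\|X-Y\|=2\sin(d(X,Y)/2)$ and this passage commutes with taking infima over orbit pairs, one obtains $D(G)\leq 2\sin(T(\Sigma)/2)$, which is exactly the claimed $\sin(T(\Sigma)/2)\geq D(G)/2$.

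The main obstacle is step two: one must verify, case by case through Dynkin's classification, that every basic irreducible representation whose dimension lies outside $\mathcal{E}\cup\{2^k\}_{k\geq 0}$ does meet the explicit bound $\sqrt{2-\sqrt{2}}$. For the exceptional groups this is a finite, explicit computation of orbit geometry on short list of highest-weight vectors; for the classical infinite families the difficulty is to argue uniformly in the rank, using the structure of weights of basic fundamental representations to ensure that no sequence of diameters can drift below $2\sin(\pi/8)$.
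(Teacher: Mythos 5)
Your overall strategy is the paper's: exclude BS-actions because basic spin representations force $n=2^k$, exclude the troublesome exceptional-group representations through $\mathcal{E}$, get an explicit quantitative bound from the case analysis behind Theorem~\ref{t-main}, and convert it to the time bound via the chord/geodesic relation of Section~\ref{s-andrei} (inequality \eqref{e-ineq}; your ``unit-speed geodesic'' phrasing is an overstatement --- the drift only gives $\dot d(t)\le\|AX(t)\|\le 1$ --- but the inequality you conclude is the correct one). However, the quantitative heart of the corollary is left open: you acknowledge that establishing $D(G)\ge 2\sin(\pi/8)=\sqrt{2-\sqrt2}$ case by case is ``the main obstacle'' rather than closing it. In the paper this constant is not obtained by pairing a highest-weight vector with an element of its own orbit, but from the estimate $M(G)\le 1/\sqrt2$, proved by testing $\Phi(g)X$ against superposition vectors of the form $\tfrac1{\sqrt2}(v_1\otimes w_1+v_2\otimes w_2)$ or $\tfrac1{\sqrt2}(e_1\wedge\dots\wedge e_k+e_{k+1}\wedge\dots\wedge e_{2k})$ (Lemmas \ref{l-tens} and \ref{l-k2}, used in Propositions \ref{p-tensprod}, \ref{p-cartanp}, \ref{p-an} and in Section \ref{s-Cn}), and then converted by \eqref{e-MD}; these vectors $Y$ are not in the $G$-orbit of $X$, so your proposed computation is not merely unfinished, it is set up on the wrong pairs of points.

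A second genuine gap is coverage. The corollary quantifies over \emph{all} systems whose dimension avoids $\{2^k\}\cup\mathcal{E}$, but your case analysis only addresses basic irreducible representations of simple groups. You must also treat: (i) representations whose realification is reducible, and real representations (Proposition \ref{p-reduc}, giving $D(G)\ge\sqrt2$) --- this is how the paper handles the non-spin basic representations of $B_n,D_n$; (ii) irreducible representations of direct products of simple groups, e.g.\ $SU(2)\times SU(3)$ acting in dimension $6$ (Proposition \ref{p-tensprod}); (iii) Cartan products, e.g.\ the $6$-dimensional representation of $SU(3)$ of highest weight $2\omega_1$ (Proposition \ref{p-cartanp}). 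None of these dimensions is excluded by your hypotheses, so your argument as written does not reach them. Relatedly, your claim that the basic representations of the exceptional groups have dimensions forming ``precisely'' $\mathcal{E}$ is false: $\mathcal{E}$ deliberately omits the adjoint representations (dimensions $14,52,78,133,248$) and the $7$-dimensional representation of $G_2$, which are allowed by the corollary's hypothesis and are handled in the paper precisely through the real-representation case (Proposition \ref{p-reduc}), a case your proposal never invokes; without it, e.g.\ $n=52$ with the adjoint action of $F_4$ is left unproved.
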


Our proof consists of several successive reductions of the problem. First, in
Section \ref{preliminary}, we give first estimates about the diameter. Then, in Section \ref{s-simple} we reduce the problem to basic irreducible representations of $G$,
where $G$ is a classical compact, simple, simply connected Lie group. In Section \ref{s-exspin}, we study basic representations of the classical groups, except for basic spin representations of spin groups. Section \ref{s-proofs} contains the proof of our main results. We collect technical lemmas in Appendix \ref{appendix}.

\section{Estimates for the diameter} \label{preliminary}
In this section, we find first estimates about the diameter $D(G)$. We first observe that, by invariance of translations in Lie groups, it holds 
\begin{equation}
D(G)=\sup_{X,Y\in S^{2n-1}}\inf_{g\in G}\|gX-Y\|.
\label{e-D}
\end{equation}

We first find a universal gap for reducible representations. We then study irreducible representations.

\subsection{Reducible representations}

We first prove the following result.

\FR{\begin{proposition} \label{p-reduc} Let $\Phi$ be a unitary representation of the group $G$, which realification is reducible. Then, the
diameter satisfies  $D(G)\geq \sqrt{2}$.
\end{proposition}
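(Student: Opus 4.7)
The plan is to exploit a proper real $G$-invariant subspace directly, producing two orthogonal unit vectors whose orbits are separated by exactly $\sqrt{2}$.

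Since the realification of $\Phi$ is reducible, view $\mathbb{C}^n\cong\mathbb{R}^{2n}$ endowed with the real inner product $\Re\langle\cdot,\cdot\rangle$, with respect to which $G$ acts by orthogonal transformations (unitary maps are real-orthogonal). Reducibility of the realification gives a proper nonzero $G$-invariant real subspace $V\subset\mathbb{R}^{2n}$. Because $G$ preserves the real inner product, the orthogonal complement $V^{\perp}$ is likewise $G$-invariant and nonzero. Pick any $X\in V\cap S^{2n-1}$ and any $Y\in V^{\perp}\cap S^{2n-1}$; such vectors exist since both $V$ and $V^\perp$ are nontrivial.

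For every $g\in G$, invariance of $V$ gives $gX\in V$, hence $gX\perp Y$ in the real inner product. Therefore
\[
\|gX-Y\|^{2}=\|gX\|^{2}+\|Y\|^{2}=2,
\]
so $\inf_{g\in G}\|gX-Y\|=\sqrt{2}$. Plugging this choice of $X,Y$ into the formula \eqref{e-D} for $D(G)$ yields $D(G)\geq\sqrt{2}$, which is the claim.

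The argument is essentially immediate once one recognizes that the relevant invariance is over $\mathbb{R}$ (so that an orthogonal decomposition of $\mathbb{R}^{2n}$ is available). The only minor point to keep in mind is that one should not ask for a $\mathbb{C}$-invariant decomposition: if $\Phi$ were only complex-reducible we would of course obtain the same conclusion, but the hypothesis here is genuinely weaker, allowing e.g.\ a complex-irreducible representation that is of real type. No obstacle is expected.
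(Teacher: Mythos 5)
Your proof is correct and follows essentially the same route as the paper: the paper's proof also takes two real-orthogonal $\Phi$-invariant subspaces, picks unit vectors $X,Y$ in each, and concludes $\|\Phi(g)X-Y\|^2=2$ by Pythagoras. Your only addition is to spell out why an invariant real complement exists (orthogonality of the real action), which the paper leaves implicit.
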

\begin{remark} This also covers the cases of $\Phi$ being a real representation (e.g. adjoint representations), or reducible as a complex representation.
\end{remark}
\begin{proof} Let $H_1,H_2$ be two (real) orthogonal subspaces, being invariant for $\Phi$. Choose $X\in H_1$ and $Y\in H_2$, for which it holds $$|\Phi(g)X-Y|^2=|\Phi(g)X|^2+|Y|^2=2$$ for any $g\in G$.
\end{proof}}

From now on, we will then consider irreducible representations only. We will also use this estimate for real representations in Section \ref{s-spin}, for some basic representations of the groups $B_n,D_n$.

\subsection{Irreducible representations}

In this section, we define a new problem for the pair $(G,\Phi)$, with the goal of estimating the diameter $D(G)$.

Recall that $D(G)$ satisfies \eqref{e-D}. Observe that it holds
$$\|gX-Y\|^2=\|gX\|^2-2\Re<gX,Y>+\|Y\|^2.$$
Then, recalling that $\|gX\|=\|Y\|=1$, it holds
\begin{equation}
D(G)^{2}=2\sup_{X,Y\in S^{2n-1}}\inf_{g\in G}(1-\Re%
<gX,Y>).\label{diam}%
\end{equation}

Define now
\begin{eqnarray*}
R(G):=\inf_{X,Y\in S^{2n-1}}\sup_{g\in G}\operatorname{Re}<gX,Y>,\\
M(G):=\inf_{X,Y\in S^{2n-1}}\sup_{g\in G}%
|<gX,Y>|,
\end{eqnarray*}
It is clear that $M(G)\geq R(G)$ and that 
$$R(G)\leq 1-\frac{(\delta^{\prime})^2}2 \qquad \Leftrightarrow\qquad  D(G)\geq \delta'.$$

Then
\begin{equation}\label{e-MD}
M(G)\leq \lambda<1 \qquad \Rightarrow\qquad D(G)\geq   \sqrt{2(1-\lambda)}.
\end{equation}

\FR{The goal of the paper is to prove that, except for BS-actions, it either holds $D(G)\geq \sqrt{2}$ or $M(G)\leq \lambda<1$ for a universal $\lambda$. Then, formula \eqref{e-MD} provides the proof of Theorem \ref{t-main}.}

\section{Reduction to basic representations of classical compact simple
Lie groups}
\label{s-simple}

In this section, we reduce our problem of finding an upper estimate for $M(G)$ to the case of \FR{non-BS actions} of the classical compact simple Lie groups $A_n,B_n,C_n,D_n$, by a series of successive simplifications. We first consider the following simplification on the structure of $G$.

\FR{\begin{simplification} $G$ is compact and $\Phi$ is irreducible.
\end{simplification}
\begin{proof} We assume that $\Phi$ is irreducible, otherwise we apply Proposition \ref{p-reduc} to find a lower bound for $D(G)$. Recall that $G$ is injectable in the compact group $U(n)$. We now apply the following theorem, due to Weil. Observe that connectedness of $G$ is crucial here.

 \begin{theorem}[\cite{dixm} 16.4.8] \label{t-dix} Let $G$ be a connected, locally compact group. Then $G$ is injectable in a compact group if and only if  $G=\R^p\times K$ with $p\geq 0$ and $K$ a compact group.
\end{theorem}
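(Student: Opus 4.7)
The statement is the classical Freudenthal--Weil theorem, identifying the connected locally compact groups that admit a continuous injective homomorphism into a compact group (the so-called \emph{maximally almost periodic} groups). The plan is to prove the two directions of the equivalence separately.

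The ``if'' direction is constructive. Given $G=\R^p\times K$, I would build the injection by using $\mathrm{id}_K$ on the compact factor and, on $\R^p$, by choosing linear forms $\alpha_1,\dots,\alpha_q:\R^p\to\R$ whose values on a $\mathbb{Q}$-basis of $\R^p$ are rationally independent (for $q$ large enough, e.g.\ $q=p+1$ suffices). Then $t\mapsto(e^{2\pi i\alpha_1(t)},\dots,e^{2\pi i\alpha_q(t)})$ is a continuous injective homomorphism $\R^p\to T^q$ (the kernel is exactly the common zero set of the $\alpha_j$ modulo $\mathbb{Z}$, which by construction is trivial), and taking the product with $\mathrm{id}_K$ injects $G$ into the compact group $T^q\times K$.

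The ``only if'' direction is the substantial one. Given a continuous injection $\iota:G\to L$ with $L$ compact, I would first reduce to the Lie case: by Gleason--Yamabe applied to $G$ and Peter--Weyl applied to $L$ (which allows one to replace $L$ by a compact Lie group through which a small enough neighborhood of the identity of $\iota(G)$ factors), one may assume both $G$ and $L$ are connected Lie groups. Then take the Levi decomposition $\mathfrak{g}=\mathfrak{r}\oplus\mathfrak{s}$. The closure of $\iota(\exp\mathfrak{s})$ in $L$ is a compact Lie group whose Lie algebra contains a copy of $\mathfrak{s}$, so $\mathfrak{s}$ is of compact type, and hence the semisimple factor $S$ is a compact Lie group. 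Similarly the closure of $\iota(R)$ in $L$ is a compact connected solvable Lie group, hence a torus; this forces the radical $R$ itself to be abelian, and so $R\cong\R^p\times T^c$ for some $p,c\geq 0$.

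Finally one assembles these conclusions into a global product $G=\R^p\times K$. The derived subgroup $[G,G]$ is compact (it is the image of the compact semisimple factor, together with commutators from the abelian radical), so one picks a maximal compact subgroup $K\subset G$ containing both $[G,G]$ and the torus part of $R$, and splits the short exact sequence $1\to K\to G\to\R^p\to 1$. The step I expect to be the main obstacle is precisely this last splitting: it requires ruling out any twist between the vector part and the compact part, which rests on the contractibility of $\R^p$ and on the triviality of any continuous homomorphism from the connected group $\R^p$ into the (totally disconnected) component group $\mathrm{Out}(K)$. Once the splitting is established, the resulting isomorphism $G\cong\R^p\times K$ is the required decomposition.
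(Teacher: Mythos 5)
This statement is not proved in the paper at all: it is Weil's theorem on maximally almost periodic connected groups, quoted verbatim from Dixmier (16.4.8), so there is no internal proof to compare with; what you propose is a sketch of the Freudenthal--Weil theorem itself. Your skeleton (irrational embedding of $\R^p$ into a torus for the ``if'' part; reduction to the Lie case, Levi decomposition, compactness of the semisimple factor and abelianity of the radical for the ``only if'' part) is the standard one, but it has a genuine gap exactly at the point you flag as ``the main obstacle'', and your proposed fix does not close it. From the embedding you extract only that $\mathfrak{s}$ is of compact type and that the radical $R$ is abelian; these two facts alone do \emph{not} imply $G\cong\R^p\times K$. The group $\R^3\rtimes SU(2)$ (rotation action) has compact Levi factor and abelian radical, yet is not a direct product and in fact is not injectable into any compact group; your argument as written cannot exclude it, because the parenthetical claim that $[G,G]$ is compact silently assumes that the commutators $[S,R]$ are harmless, which is precisely what must be proved. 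Moreover, the obstruction is not an outer-action twist: triviality of continuous homomorphisms $\R^p\to\mathrm{Out}(K)$ plus contractibility of $\R^p$ does not split the extension $1\to K\to G\to\R^p\to 1$ as a direct product (and in the twisted example above the subgroup $K\supseteq[G,G]$ you want to start from does not even exist, since $[G,G]=G$).

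The missing idea is to use the embedding once more to get \emph{centrality} of the radical. Replace $L$ by the closure of $\iota(G)$, so that $\iota(G)$ is dense; then $\overline{\iota(R)}$ is a closed connected solvable \emph{normal} subgroup of the compact connected group $L$, hence a central torus of $L$; injectivity of $\iota$ then forces $R$ to be central in $G$. With $R$ central everything assembles painlessly: $[G,G]=S$ is compact, $G=R\cdot S$ with $R\cong\R^p\times T^c$ central and $R\cap S$ finite, so the finite intersection lies in the torus part and $G\cong\R^p\times\bigl((T^c\times S)/F\bigr)$. Two smaller points: your reduction to the Lie case is too quick --- you cannot simply replace $L$ by a compact Lie quotient without destroying injectivity; one has to work with the kernels $\iota^{-1}(\ker(L\to L_\beta))$ of the Peter--Weyl quotients, prove the Lie case for each $G/\iota^{-1}(\ker(L\to L_\beta))$ (which is Lie because it injects into a group with no small subgroups), and then pass to the projective limit, which is a nontrivial step in Weil's and Dixmier's proofs. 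And in the ``if'' direction, rational independence of the values $\alpha_j(e_i)$ is not literally sufficient (it does not even guarantee the needed rank conditions); choosing the $(p{+}1)\times p$ matrix with rows $1,\alpha,\dots,\alpha^p$ for $\alpha$ transcendental, or any hyperplane in $\R^{p+1}$ meeting $\mathbb{Z}^{p+1}$ only at $0$, gives a clean injection $\R^p\hookrightarrow T^{p+1}$.
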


We then have $G=\R^p\times K$ with $K$ compact. Since the inclusion $G\hookrightarrow U(n)$ is an irreducible unitary representation of a direct product, it is the tensor product of irreducible unitary representations of the components. It is then of the form $\Phi_1\otimes \Phi_2$, where $\Phi_1$ is a unitary character of $\R^p$ and $\Phi_2$ is an irreducible unitary representation of $K$ compact. Observe that characters of $\R^p$ with $p\geq 1$ are never faithful, while the inclusion is. We then have $p=0$ and $G=K$ compact.
\end{proof}
We applied similar arguments for controllability of quantum systems in \cite{BGRS}.}

\begin{simplification} \label{s-prod} $G$ is the direct product of several simple, compact, connected, simply connected Lie groups. \FR{The representation $\Phi$ is irreducible. If $G=Spin(m)$, then $\Phi$ is not a basic spin representation.}
\end{simplification}
\begin{proof} Since $G$ is compact and connected, it is
covered by the group $T\times H$ being the direct product of a torus and a compact connected, simply connected Lie
group, see \cite[Thm 8.1, p. 233]{brocker}.

The unitary irreducible representation $\Phi$ of $G$ then induces the unitary irreducible representation $\tilde \Phi:=\Phi\circ\Pi$ of $\tilde G:=T\times H$, where $\Pi:\tilde G\to G$ is the covering map. It clearly holds $M(\tilde G)=M(G)$. 

\FR{Since $\tilde \Phi$ is irreducible, it is unitarily equivalent to the product of irreducible representations of the components. Irreducible representations of toric components are characters $e^{i\tau}$, thus they play no role for $M(\tilde G)$.} Hence, we can remove the toric component, write the representation $\Phi'(h):=\tilde \Phi(1,h)$ and have $M(H)=M(G)$. Moreover, since $(G,\Phi)$ is not a BS-action, then $\Phi'$ is not a basic spin representation of $H$ whenever $H=Spin(m)$.

Since $H$ is compact and simply connected, then it is semisimple, see \cite[Remark (7.13), p. 229]{brocker}. Hence, it is the direct product of simple groups, se e.g. \cite[Theorem 0.3]{dynkin}.
\end{proof}

\begin{remark}
Under Simplification \ref{s-prod}, we already know that, for each fixed $n$, there exists a lower bound $\delta(n)$ such that, for all $(G,\Phi)$ non-transitively acting on the sphere $S^{2n-1}$, Theorem \ref{t-main} holds with such $\delta(n)$. Indeed, the number of such groups satisfying Simplification \ref{s-prod} is finite, as a consequence of the classification of simple compact Lie groups.
\end{remark}

We now study the case of $G$ being the direct product of at least two simple compact, connected, simply connected Lie groups. We first have the following bound for unitary representations of direct products.
\begin{proposition}
\label{p-tensprod} Let $\Phi$ be a unitary irreducible representation of $G=G_1\times G_2$, with  $G_1,G_2$ semisimple, compact, connected groups. It then holds $M(G)\leq1/\sqrt{2}$.
\end{proposition}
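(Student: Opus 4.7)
My plan rests on two ingredients: the tensor-product decomposition of irreducibles of a direct product, and a well-chosen pair of ``product vs.\ entangled'' test vectors. First, by a standard application of Schur's lemma, every irreducible unitary representation of a direct product $G_1\times G_2$ of compact groups is unitarily equivalent to a tensor product $\Phi_1\otimes\Phi_2$ of irreducibles of the factors. Hence I would realize $\mathbb{C}^n$ as $\mathbb{C}^{n_1}\otimes\mathbb{C}^{n_2}$ with $\Phi(g_1,g_2)=\Phi_1(g_1)\otimes\Phi_2(g_2)$ and $n=n_1n_2$. Since each $G_i$ is semisimple it admits no nontrivial $1$-dimensional representation, so in the situation of interest (both factors genuinely contributing to $\Phi$) one has $n_1,n_2\geq 2$; otherwise the problem degenerates to a single-factor one.

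Next, I would pick orthonormal vectors $e_1,e_2\in\mathbb{C}^{n_1}$ and $f_1,f_2\in\mathbb{C}^{n_2}$ and test with
\[
X:=e_1\otimes f_1,\qquad Y:=\tfrac{1}{\sqrt{2}}\bigl(e_1\otimes f_1+e_2\otimes f_2\bigr),
\]
both of unit norm. Since $gX=(\Phi_1(g_1)e_1)\otimes(\Phi_2(g_2)f_1)$ remains a product vector, the inner product $\langle gX,Y\rangle$ splits into a sum of two terms, each factoring as a product of one $\mathbb{C}^{n_1}$-component and one $\mathbb{C}^{n_2}$-component. Writing $a_j:=|\langle\Phi_1(g_1)e_1,e_j\rangle|$ and $c_j:=|\langle\Phi_2(g_2)f_1,f_j\rangle|$ for $j=1,2$, Bessel's inequality gives $a_1^2+a_2^2\leq 1$ and $c_1^2+c_2^2\leq 1$, so by the triangle inequality followed by Cauchy--Schwarz
\[
|\langle gX,Y\rangle|\leq\tfrac{1}{\sqrt{2}}(a_1c_1+a_2c_2)\leq\tfrac{1}{\sqrt{2}}\sqrt{(a_1^2+a_2^2)(c_1^2+c_2^2)}\leq\tfrac{1}{\sqrt{2}},
\]
uniformly in $g\in G$. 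Taking the supremum in $g$ for this particular pair $(X,Y)$ and then the infimum over $(X,Y)\in S^{2n-1}\times S^{2n-1}$ in the definition of $M(G)$ yields $M(G)\leq 1/\sqrt{2}$.

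The argument is short and the final Cauchy--Schwarz estimate is routine; the main conceptual move is the choice of the partially entangled test vector $Y$, whose two orthogonal tensor components force any product vector $gX$ to have overlap at most $1/\sqrt{2}$. The only real caveat is to verify, via the simplifications already carried out in Section \ref{s-simple}, that both factor representations are nontrivial so that $e_2$ and $f_2$ actually exist; otherwise the proposition reduces to a single-factor statement and is handled separately.
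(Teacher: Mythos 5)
Your proposal is correct and follows essentially the same route as the paper: the same tensor-product decomposition of the irreducible representation of $G_1\times G_2$, the same choice of product vector $X$ and partially entangled vector $Y=\tfrac{1}{\sqrt{2}}(e_1\otimes f_1+e_2\otimes f_2)$, and the same dimension-at-least-two observation from semisimplicity. The only cosmetic difference is that you prove the overlap bound inline via Bessel and Cauchy--Schwarz, whereas the paper delegates exactly that computation to Lemma \ref{l-tens} in the appendix.
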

\begin{proof} Since $\Phi$ is irreducible, then it is equivalent to $\Phi_1(G_1)\otimes\Phi_2(G_2)$ with $\Phi_1,\Phi_2$ irreducible.  We then estimate $M(G)=M(G_1\times G_2)$ as follows: let $\Phi_1(G_1),\Phi_2(G_2)$ act on the Hermitian spaces $V,W$, respectively. Since the representations are irreducible and $G_1,G_2$ are semisimple, then $V,W$ have dimension 2 at least.

\FR{Choose an orthonormal pair of vectors $v,v'$ in $V$, and similarly for $w,w'$ in $W$.  Choose $X=v\otimes w\in S(V)\otimes S(W)$ and observe that it holds $\Phi_1\otimes\Phi_2(g_1,g_2)X\in S(V)\otimes S(W)$ for any choice of $g=(g_1,g_2)\in G$.} Choose $Y=\frac{1}{\sqrt{2}%
}(v\otimes w+v'\otimes w')\in S(V\otimes
W)$ and apply Lemma \ref{l-tens} in the Appendix. This gives 
$$|<\Phi_1\otimes\Phi_2(g_1,g_2)X,Y>|\leq1/\sqrt{2}.$$ Since the estimate is independent on $g_1,g_2$, then the result follows.
\end{proof}

We now apply Proposition \ref{p-tensprod} to $G$ satisfying Simplification \ref{s-prod}. If it is the direct product of at least two simple, compact, connected, simply connected Lie groups, then we   find the estimate $M(G)\leq1/\sqrt{2}$.

This leads to the following simplification.

\begin{simplification}  $G$ is a simple, compact, connected, simply connected Lie group, and $\Phi$ is a unitary irreducible representation. \FR{If $G=Spin(m)$, then $\Phi$ is not a basic spin representation.}
\end{simplification}

We now prove the bound for $M(G)$ when $\Phi$ is the Cartan product of irreducible representations.

\begin{proposition}
\label{p-cartanp} Let $\Phi$ be the Cartan product of two unitary irreducible
representations. Then $M(G)\leq 1/{\sqrt{2}}.$
\end{proposition}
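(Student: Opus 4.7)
The plan is to adapt the test-vector strategy of Proposition \ref{p-tensprod} to the diagonal situation, exploiting the fact that the Cartan component is realized inside a tensor product. Write $\Phi$ as the Cartan product of $\Phi_1,\Phi_2$, where $\Phi_i\colon G\to U(V_i)$ is a non-trivial irreducible unitary representation with highest weight $\lambda_i$, so that $V_C\subset V_1\otimes V_2$ is the irreducible subrepresentation containing the highest-weight vector $v_{\lambda_1}\otimes v_{\lambda_2}$, and $\Phi(g)$ is the restriction of $\Phi_1(g)\otimes\Phi_2(g)$ to $V_C$.

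As test vectors I would take $X:=v_{\lambda_1}\otimes v_{\lambda_2}$ and $X':=v_{w_0\lambda_1}\otimes v_{w_0\lambda_2}$, the tensor products of unit highest- and lowest-weight vectors, where $w_0$ denotes the longest element of the Weyl group. Since $G$ is simple, compact, simply connected and $\Phi_i$ is non-trivial, the dominant weight $\lambda_i$ is non-zero, hence distinct from its antidominant image $w_0\lambda_i$; the corresponding weight spaces are orthogonal in the Hermitian structure, so $X\perp X'$ and $Y:=(X+X')/\sqrt{2}$ is a unit vector in $V_1\otimes V_2$.

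The heart of the argument is to verify that both $X$ and $Y$ actually lie in $V_C$. The vector $X$ is the highest-weight vector of $V_C$ by construction. For $X'$, I would argue that the $w_0(\lambda_1+\lambda_2)$-weight space of $V_1\otimes V_2$ is one-dimensional: any pair of weights $(\mu_1,\mu_2)$ of $(V_1,V_2)$ with $\mu_1+\mu_2=w_0\lambda_1+w_0\lambda_2$ must satisfy $\mu_i-w_0\lambda_i\geq 0$ in the root order (since $w_0\lambda_i$ is the lowest weight of $V_i$), while the two differences simultaneously sum to zero, forcing each to vanish. Since $V_C$ does contain a non-zero vector of this weight, namely its own lowest-weight vector, $X'$ must belong to $V_C$, and hence so does $Y$.

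With $X,Y\in S(V_C)$ selected, the bound follows from Lemma \ref{l-tens} exactly as in Proposition \ref{p-tensprod}: for every $g\in G$ the vector $\Phi(g)X=\Phi_1(g)v_{\lambda_1}\otimes \Phi_2(g)v_{\lambda_2}$ is a unit simple tensor in $V_1\otimes V_2$, and Lemma \ref{l-tens} applied to the orthonormal pairs $v_{\lambda_1},v_{w_0\lambda_1}$ in $V_1$ and $v_{\lambda_2},v_{w_0\lambda_2}$ in $V_2$ yields $|\langle \Phi(g)X,Y\rangle|\leq 1/\sqrt{2}$. Taking the supremum over $g$ gives $M(G)\leq 1/\sqrt{2}$. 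The only delicate point is the weight-space uniqueness needed to place $X'$ inside $V_C$; once this is established, the remainder is a direct transcription of the tensor-product estimate.
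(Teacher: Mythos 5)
Your proof is correct and takes essentially the same route as the paper: the same test vectors $X=h_1\otimes h_2$ and $Y=\tfrac1{\sqrt2}(h_1\otimes h_2+l_1\otimes l_2)$ combined with Lemma \ref{l-tens}, while your weight-space argument for placing the lowest-weight tensor inside the Cartan component is exactly the content of the paper's Lemma \ref{l-fuchs} (you even supply the root-order uniqueness detail that the paper states without proof).
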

\begin{proof} Let $\Phi$ be the Cartan product of  $\Phi_{1}$ and $\Phi_{2}$, each acting on the Hermitian space $V,W$, respectively.
Let $l_{1},l_{2}$ be two unit lowest weight vectors of $\Phi_{1}$ and
$\Phi_{2}$ respectively. Let $h_{1},h_{2}$ be two unit highest weight vectors
of $\Phi_{1}$ and $\Phi_{2}$ respectively.

By Lemma \ref{l-fuchs} in the Appendix, we have that $h_{1}\otimes h_{2}$ is in the irreducible component of $\Phi_{1}%
\otimes\Phi_{2}$ corresponding to $\Phi$ (by definition), but also
$l_{1}\otimes l_{2}$ is in the same irreducible component. Choose $X=h_{1}\otimes h_{2}$ and $Y=1/\sqrt{2}(h_{1}\otimes h_{2}+l_{1}\otimes l_{2})$. 
Since $\Phi(g)X\in S(V)\otimes S(W)$, one can apply Lemma \ref{l-tens} in the Appendix, that implies 
$$|<\Phi(g)X,Y>| \leq1/\sqrt{2}.$$
Pass to the supremum over $g\in G$ to have the result.
\end{proof}

At this step, it remains only to examine the case where $\Phi$ is not a Cartan product. \FR{As recalled above, it is called a basic representation by Dynkin \cite{dynkin}}. Recall from \cite[Supp. 22]{dynkin} that basic representations stand in a natural correspondence with simple roots, thus each group has a finite number of basic representations.

\subsection{Basic representations of exceptional groups} \label{s-exceptional}

\FR{In this section, we study basic representations of exceptional groups. Their dimensions can be computed by Weyl’s dimension formula, see e.g. \cite[Chap. XIII]{cahn}.

Following \cite[Table 30]{dynkin}, it holds:
\begin{itemize}
\item $G_2$ has two basic representations, of dimensions 7 and 14. The basic representation of dimension 7 is the composition of the inclusion $G_2 \hookrightarrow SO(7)$ and the natural real action of $SO(7)$ on $S^6$, see \cite{greenwald}. This representation acts (transitively) on $S^{6}$, but not on the complex sphere $S^{2n-1}=S^{13}$. Since it is a real representation, then we can remove it, due to Proposition \ref{p-reduc}. The basic representation of dimension 14 is the adjoint representation, see \cite[Table 28]{dynkin}. Since it is also real, we can remove it too.
\item $F_4$ has four basic representations, of dimensions 52 (adjoint representation), 1274, 273, 26.
\item $E_6$ has six basic representations, one for each dimension 78 (adjoint representation), 2925 and two for each dimension 27, 351.
\item $E_7$ has seven basic representations, of dimensions 912, 56, 1539, 27664, 365750, 8645, 133  (adjoint representation).
\item $E_8$ has eight basic representations, of dimensions 147250, 248 (adjoint representation), 72912, 24502500, 5121384450, 87587590464, 6696000, 3875.
\end{itemize}

By removing representations that either act transitively or are real (adjoint representations), we have that the set of possible dimensions is (eventually strictly) contained in $\mathcal{E}$ defined in Corollary \ref{c-racine2}. In fact, we have an equality, since each of such representations acts non-transitively on the complex sphere, see \cite{besse}. For each of such representations, it holds $M(G)<1$. Since they are in finite number, the supremum of such $M(G)$ is strictly smaller than 1 too.}

We now reach the following final simplification of our problem.

\begin{simplification} \label{s-main} $G$ is a classical group in the series $A_n,B_n,C_n,D_n$, and $\Phi$ is a basic, unitary, not real, representation. $(G,\Phi)$ acts non-transitively on the corresponding sphere $S^{2m-1}$ and is not a BS-action.
\end{simplification}

\section{Study of classical compact simple Lie groups}
\label{s-exspin}

In this section, we assume that Simplification \ref{s-main} holds and  study the four series of classical groups.
\subsection{The study of $A_{n}$} \label{s-An}

Let $G=A_n$ satisfy Simplification \ref{s-main}, then $G=SU(n+1)$. Denote by $\Psi$ the natural representation of $G=SU(n+1),$ that acts transitively  over $S^{2n+1}\subset \mathbb{C}^{n+1}$, i.e. $\Psi(g)X=gX$. We have the following proposition.
\begin{proposition}  \label{p-an} Let $G=A_n$ and $\Phi$ satisfy Simplification \ref{s-main}. It holds $$M(G)\leq 1/\sqrt{2}.$$
\end{proposition}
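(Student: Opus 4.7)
The approach is to identify the basic representations of $A_n = SU(n+1)$ as the fundamental (exterior power) representations $\Lambda^k\Psi$ on $\Lambda^k\mathbb{C}^{n+1}$ for $k=1,\ldots,n$, since a fundamental weight $\omega_k$ cannot be written as a non-trivial sum of dominant weights. The extreme cases $k=1$ and $k=n$ (the defining representation and its dual) act transitively on the unit sphere and are excluded by Simplification~\ref{s-main}. Moreover, the duality $\Lambda^{n+1-k}\Psi\cong \overline{\Lambda^k\Psi}$, obtained from the perfect pairing $\Lambda^k V\otimes\Lambda^{n+1-k}V\to \Lambda^{n+1}V=\mathbb{C}$, together with the elementary invariance $M(G,\Phi)=M(G,\bar\Phi)$ under complex conjugation, lets us restrict attention to $2\le k\le\lfloor(n+1)/2\rfloor$, so in particular $2k\le n+1$.

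For such $k$, I would choose
\[
X = e_1\wedge\cdots\wedge e_k,\qquad Y = \frac{1}{\sqrt{2}}\bigl(e_1\wedge\cdots\wedge e_k + e_{k+1}\wedge\cdots\wedge e_{2k}\bigr),
\]
two unit vectors in $\Lambda^k\mathbb{C}^{n+1}$ ($Y$ being well-defined precisely because $2k\le n+1$). Writing $g\in SU(n+1)$ with columns $c_1,\ldots,c_{n+1}$, we have $\Phi(g)X = c_1\wedge\cdots\wedge c_k$, and a direct computation of Plücker coordinates yields
\[
\langle\Phi(g)X,Y\rangle = \frac{1}{\sqrt{2}}\bigl(\det g_{I,I} + \det g_{J,I}\bigr),
\]
where $I=\{1,\ldots,k\}$, $J=\{k+1,\ldots,2k\}$, and $g_{K,L}$ denotes the submatrix of $g$ with rows $K$ and columns $L$. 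This parallels the choices made in Propositions~\ref{p-tensprod} and~\ref{p-cartanp}, but adapted to exterior powers.

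The key estimate is $|\det g_{I,I}|+|\det g_{J,I}|\le 1$. Setting $\alpha_i = \|\pi_I c_i\|$ and $\beta_i=\|\pi_J c_i\|$, with $\pi_S$ the orthogonal projection onto the coordinates indexed by $S$, the unit-norm condition on each $c_i$ gives $\alpha_i^2+\beta_i^2\le 1$. Hadamard's inequality bounds $|\det g_{I,I}|\le \prod_i\alpha_i\le \alpha_1\alpha_2$ and $|\det g_{J,I}|\le \prod_i\beta_i\le \beta_1\beta_2$, the last step in each using $\alpha_i,\beta_i\le 1$ for $i\ge 3$ and being precisely where the hypothesis $k\ge 2$ enters. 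Cauchy--Schwarz then yields $\alpha_1\alpha_2+\beta_1\beta_2\le\sqrt{\alpha_1^2+\beta_1^2}\sqrt{\alpha_2^2+\beta_2^2}\le 1$, so $|\langle\Phi(g)X,Y\rangle|\le 1/\sqrt{2}$ uniformly in $g$ and hence $M(G)\le 1/\sqrt{2}$. The main obstacle is this Hadamard step, since for $k=1$ one would only obtain $\alpha_1+\beta_1\le\sqrt{2}$, consistent with $\Lambda^1\Psi$ acting transitively; a secondary consistency check is that at the borderline $k=(n+1)/2$ with $k$ even (e.g.\ $n=3$, $k=2$), $\Lambda^k\Psi$ is of real type and is already excluded by Proposition~\ref{p-reduc}, while for $k$ odd it is quaternionic and the argument above applies directly.
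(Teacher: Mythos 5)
Your proposal is correct, and its skeleton coincides with the paper's: same identification of the basic representations of $SU(n+1)$ with the exterior powers $\Lambda^k\Psi$, same exclusion of $k=1$ (and $k=n$) as transitive, same reduction to $2\le k\le (n+1)/2$ by duality, and the very same test vectors $X=e_1\wedge\cdots\wedge e_k$, $Y=\frac{1}{\sqrt2}(e_1\wedge\cdots\wedge e_k+e_{k+1}\wedge\cdots\wedge e_{2k})$. Where you genuinely diverge is in how the key determinant bound is established. The paper delegates it to Lemma~\ref{l-k2}, whose proof reduces $A=g_{I,I}$ and $B=g_{J,I}$ to upper triangular form via block-unitary column operations (Schur/QR-type) and then runs an extremum analysis of $\prod_i|A_{ii}|+\prod_i\sqrt{1-|A_{ii}|^2}$; you instead prove $|\det g_{I,I}|+|\det g_{J,I}|\le 1$ directly by Hadamard's inequality applied columnwise followed by Cauchy--Schwarz on $(\alpha_1,\beta_1)$, $(\alpha_2,\beta_2)$, with $k\ge 2$ entering exactly where it should. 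Your argument is shorter, avoids the triangularization and the optimization step, and would serve as an alternative proof of Lemma~\ref{l-k2} itself --- which matters beyond $A_n$, since the paper reuses that lemma for $C_n$ in Section~\ref{s-Cn}, so your estimate covers that case as well. Your handling of $(n+1)/2<k<n$ via $\Lambda^{n+1-k}\Psi\cong\overline{\Lambda^k\Psi}$ and the invariance $M(G,\Phi)=M(G,\bar\Phi)$ is essentially the paper's duality isometry in different (arguably cleaner) clothing, since the paper's explicit map is the conjugate-linear Hodge-type pairing; the closing remarks on the real/quaternionic type at the borderline $k=(n+1)/2$ are not needed but are consistent with the paper's setup.
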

\begin{proof} The basic representations of $G$ are the representations
$\Phi=\Psi^{\{k\}}$ acting (unitarily) on the $k^{th}$ exterior power
$\Lambda^{k}(\mathbb{C}^{n+1})$ in the natural way, where $k=1,...,n$. See \cite[Supplement 24]{dynkin}. 

The case $k=1$ corresponds to $\Phi=\Psi$, that acts transitively, hence it does not satisfy Simplification \ref{s-main}.

Assume now $1<k\leq\frac{n+1}{2}$. Choose $X=e_{1}\wedge e_{2}\wedge...\wedge
e_{k}\in S(\Lambda^{k}(\mathbb{C}^{n+1}))$ and $Y=1/\sqrt{2}(e_{1}\wedge e_{2}\wedge
...\wedge e_{k}+e_{k+1}$ $\wedge e_{k+2}\wedge
...\wedge e_{2k})$. Lemma \ref{l-k2} in the Appendix implies 
$$|<\Phi(g)X,Y>|\leq1/\sqrt{2}$$
for all $g\in G$ and $M(\Phi^{\{k\}}(G))\leq1/\sqrt{2}.$

Assume now that it holds $\frac{n+1}{2}<k<n$. Recall that there exists a duality isometry $I:\Lambda^{k'}(\mathbb{C}^{n+1})\to\Lambda^{k}(\mathbb{C}^{n+1})$ with $k'=n+1-k$, explicitly given by $$e_1\wedge\ldots \wedge\widehat{e_{i_1}}\wedge\ldots\wedge\widehat{e_{i_2}}\wedge\ldots\wedge\widehat{e_{i_k}}\wedge\ldots\wedge e_{n+1}\to e_{i_1}\wedge e_{i_2}\wedge\ldots\wedge e_{i_k},$$
where $\widehat{e_i}$ is the removal of the vector $e_i$. This also implies that the dual representation $\Phi'$ acting on $\Lambda^{k'}(\mathbb{C}^{n+1})$ of a basic representation $\Phi$ acting on $\Lambda^{k}(\mathbb{C}^{n+1})$ is a basic representation itself. Then, apply the previous study to $\Phi'$ and find $X',Y'$ satisfying $|<\Phi'(g) X',Y'>|\leq 1/\sqrt{2}$. By the duality isometry, choose $X=I(X'), Y=I(Y')$, both in $\Lambda^{k}(\mathbb{C}^{n+1})$ and observe that it holds
$$|<\Phi(g) X,Y>|=|<\Phi'(g) X',Y'>|.$$ Then, $M(\Phi^{\{k\}}(G))\leq1/\sqrt{2}$ holds in this case too.

With the same duality isometry, one associates $\Psi^{\{n\}}$ to $\Psi$, that acts transitively on $S^{2n+1}$, thus it does not satisfy Simplification \ref{s-main}.
\end{proof}
\subsection{The study of basic non-spin representations of $B_{n},D_{n}$}
\label{s-spin}
In this section, we study the case of $G=B_n$ or $D_n$ satisfying Simplification \ref{s-main}. Since $G$ is a simply connected compact group, it holds $G=Spin(2n+1),Spin(2n)$, that are the (universal) double covers of $SO(2n+1),SO(2n)$.

  The basic representations fall into two classes, see \cite[Supplements 26-27]{dynkin}: those who come from
representations of $SO(2n+1),SO(2n)$ via the covering mapping and the
remaining, that are the 3 series of basic spinor representations (one for
$Spin(2n+1)$ and two for $Spin(2n)).$ 

Since basic representations coming from representations of $SO(m)$ are real, we can apply Proposition \ref{p-reduc} to them.

\subsection{The study of $C_{n}$}\label{s-Cn}

Let $G=C_n$ satisfy Simplification \ref{s-main}. Then, $G=Sp(n)$ and the natural representation $\Psi$ is the standard representation over
$V=\mathbb{C}^{2n}$ of the real compact form $Sp(n)$ of $Sp(2n,\mathbb{C)}%
$, that acts transitively on $S(V).$

All basic representations are $\Phi=\bar{\Psi
}^{\{k\}}$, with $k=1,...,n$, acting over $\Lambda^{k}V$ again in the natural way, but the
$\Psi^{\{k\}}$ being not irreducible, $\bar{\Psi}^{\{k\}}$ is the highest
weight component of $\Psi^{\{k\}}.$ From \cite[Lemma 0.3, p. 360]{dynkin}, we know that inside the highest weight component $\bar{\Phi
}^{\{k\}}$ of $\Phi^{\{k\}},$ there are two vectors of the form $e_{i1}%
\wedge e_{i2}\wedge...\wedge e_{ik\text{ }}$ and $e_{j1}%
\wedge e_{j2}\wedge...\wedge e_{jk\text{ }}$, with $il\neq jr$
for all $l,r$. Therefore, Lemma \ref{l-k2} in the Appendix can be applied to this case too, finding the same estimate $M(G)\leq 1/\sqrt{2}$ for $C_{n}.$

\section{Proof of main results}\label{s-proofs}

In this section, we prove Theorem \ref{t-main} and Corollary \ref{c-racine2}. We first explain the connection between Agrachev's conjecture and Theorem \ref{t-main}, transforming the problem of finding estimates for $T(\Sigma)$ into a problem of representation of Lie groups.

\subsection{The Agrachev's conjecture} \label{s-andrei}
In this section, we connect Conjecture \ref{conj} with estimates on the diameter $D(G)$. We first observe that the two possibilities $T(\Sigma)=0$ or $T(\Sigma)\geq \delta$ are related to controllability of \eqref{quantum}, when the drift $A$ is replaced by the zero matrix. Indeed, consider the system 
\begin{equation} \label{e-A0}
\dot x=\sum_{i=1}^p B_i x\, u_i.
\end{equation}
Define $L=\mathrm{Lie}(B_i)$ the Lie algebra generated by the $B_i$ and $G$ its exponential group. Then, $G$ acts on $S^{2n-1}$ and the inclusion $\Phi:G\hookrightarrow U(n)$ is a unitary representation of $G$.

We now have the two following possibilities: if $G$ acts transitively on $S^{2n-1}$, then the system \eqref{e-A0} is controllable. By linearity of \eqref{e-A0}, it is controllable in arbitrarily small time. The same holds for the original system \eqref{quantum}, thus $T(\Sigma)=0$. If $G$ does not act transitively on $S^{2n-1}$, then by adding $A$, one has two possibilities. On one side, the system can be not controllable, thus $T(\Sigma)=+\infty>\delta$. On the other side, if the system is controllable, then this is a consequence of the fact that one can use the drift $Ax$, that satifies $\|A\|=1$. Take now a trajectory $X(t)$ of \eqref{quantum} and consider the evolution of the distance between orbits measured on the sphere $S^{2n-1}$ with the standard induced Riemannian distance, i.e. $$d(t):=\inf_{g\in G}\|X(0)-gX(t)\|_{S^{2n-1}}.$$ If $\dot X(t)$ is defined, then it holds $\dot X(t)=AX(t)+\sum_{i=1}^pu_i(t)B_i X(t)$. Denote with $$\mathrm{proj}_B(\dot X(t)):=\sum_{i=1}^pu_i(t)B_i X(t),$$ i.e. its component on the space generated by the $B_i$. Then, for almost every $t\in[0,T]$, it holds
\begin{eqnarray*}
\dot d(t)&\FR{\leq} &\lim_{h\to 0} \inf_{g\in G}\frac{\|X(t+h)-gX(t)\|}{h}=\lim_{h\to 0} \inf_{g\in G}\frac{\|e^{\dot X(t) h}X(t)-gX(t)\|}{h}\\
&\leq&\lim_{h\to 0}\frac{\|(e^{h(A X(t)+\mathrm{proj}_B(\dot X(t)))}-e^{h \mathrm{proj}_B(\dot X(t))}) X(t)\|}{h}\\
&=&\lim_{h\to 0}\frac{\|(e^{hA X(t)+o(h)}-\mathrm{Id}) e^{h \mathrm{proj}_B(\dot X(t))}X(t)\|}{h}=\|AX(t)\|\leq 1.
\end{eqnarray*}
This implies that the curve {\bf on the sphere} is covered with velocity less than one, then the travel time is larger than the distance on $S^{2n-1}$, i.e. $d(t)\leq t$. Instead, the distance $D(G)$ is measured in $\mathbb{C}^n$ with straight lines. Elementary geometric considerations then imply
\begin{equation}\label{e-ineq}
\sin\left(\frac{T(\Sigma)}{2}\right)\geq \frac{D(G)}2,
\end{equation} i.e. Theorem \ref{t-main} implies Conjecture \ref{conj} when G is not a BS-action.

\subsection{Proof of Corollary  \ref{c-racine2} and Theorem \ref{t-main}}

In this section, we prove the main results of our paper. We start with the Corollary.

{\it Proof of Corollary  \ref{c-racine2}.} Let $(G,\Phi)$ act non-transitively on $S^{2n-1}$. We have the following cases:
\begin{description}
\item[1] if $\Phi$ is reducible or real, apply Proposition \ref{p-reduc} to find $D(G)\geq \sqrt2$;
\item[2] otherwise, following Section \ref{s-simple} up to Simplification \ref{s-prod}, assume that $G$ is the direct product of several simple, compact, connected, simply connected Lie groups:
\begin{description}
\item[2.1] If $G$ is the direct product of at least two components, apply Proposition \ref{p-tensprod} to find $M(G)\leq 1/\sqrt2$.
\item[2.2] If $G$ is not the product, but $\Phi$ is the Cartan product of irreducible representations, apply Proposition \ref{p-cartanp} to find $M(G)\leq 1/\sqrt2$.
\end{description}
\item[3] We are left with the case of $G$ simple, compact, connected, simply connected Lie group, and $\Phi$ basic, unitary, not real, representation.
\begin{description}
\item[3.1] \FR{If $G$ is a spin group and $\Phi$ is a basic spin representation, then the space dimension satisfies $n=2^k$. Contradiction.}
\item[3.2] If $G$ is one of the exceptional groups $E_{6},E_{7},E_{8}$, $F_{4},G_{2}$, then the representation either acts transitively, or is real, or the space dimension satisfies $n\in \mathcal{E}$. See Section \ref{s-exceptional}. Contradiction.
\item[3.3] The remaining cases are studied in Section \ref{s-exspin}: they all give $M(G)\leq 1/\sqrt2$.
\end{description}
\end{description}

Recall that $M(G)\leq 1/\sqrt2$ implies $D(G)\geq  \sqrt{2-\sqrt{2}}$ from \eqref{e-MD}. This concludes the proof.\hfill $\blacksquare$

{\it Proof of Theorem \ref{t-main}.} This is identical to the proof of Corollary \ref{c-racine2}, except for Step 3, that is replaced as following:
\begin{description}
\item[3] We are left with the case of $G$ simple, compact, connected, simply connected Lie group, and $\Phi$ basic, unitary, not real, representation.
\begin{description}
\item[3.1] \FR{If $G$ is a spin group, then $\Phi$ is a basic non-spin representation by hypothesis. Then, by Section \ref{s-spin}, $\Phi$ is real and Proposition \ref{p-reduc} gives $D(G)\geq \sqrt2$.}
\item[3.2] The set $(G,\Phi)$ with $G\in \{E_{6},E_{7},E_{8}$, $F_{4},G_{2}\}$ and $\Phi$ basic, non real and acting non transitively is finite. Each of them satisfies $M(G)<1$, then it exists $\lambda'$ such that $M(G)\leq \lambda'<1$ for all such cases.
\item[3.3] \FR{The remaining cases are $G=A_n,C_n$, for which Sections \ref{s-An}-\ref{s-Cn} ensure $M(G)\leq 1/\sqrt2$.}
\end{description}
\end{description}
\FR{Summing up, some cases give $$M(G)\leq \lambda:=\max\{\lambda',1/\sqrt2\},$$ while other cases satisfy} $D(G)\geq\sqrt2$. By \eqref{e-MD}, we finally prove
$$D(G)\geq \delta':=\min\left\{\sqrt{2},\sqrt{2(1-\lambda)}\right\}=\sqrt{2(1-\lambda)}.$$

%

\appendix
\section{Technical lemmas} \label{appendix}
In this appendix, we collect some technical lemmas that are used in the paper. We first give a general estimate about distances in the tensor product spaces.
\begin{lemma}\label{l-tens} Let $V,W$ be two Hermitian spaces, and $S(V)$, $S(W)$, $S(V\otimes W)$ be the unit spheres of the corresponding spaces. Let $v,v_1,v_2\in S(V)$ with $v_1\perp v_2$, and $w,w_1,w_2\in S(W)$ with $w_1\perp w_2$. It then holds
$$\left|<v\otimes w,1/\sqrt{2}(v_1\otimes w_1+v_2\otimes w_2)>\right|\leq 1/\sqrt2.$$
\end{lemma}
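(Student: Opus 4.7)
The plan is to expand the inner product on the right-hand side using the defining property of the tensor product inner product, and then to bound the resulting scalar expression by two applications of the Cauchy--Schwarz/Bessel inequality.

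First I would introduce the scalars $a_i := \langle v, v_i\rangle$ and $b_i := \langle w, w_i\rangle$ for $i=1,2$. Since the tensor product inner product satisfies $\langle x_1 \otimes y_1, x_2 \otimes y_2\rangle = \langle x_1, x_2\rangle \langle y_1, y_2\rangle$, the quantity to estimate becomes
$$\left|\langle v\otimes w, \tfrac{1}{\sqrt{2}}(v_1\otimes w_1 + v_2\otimes w_2)\rangle\right| = \frac{1}{\sqrt{2}}\,|a_1 b_1 + a_2 b_2|.$$
So it is enough to show $|a_1 b_1 + a_2 b_2| \leq 1$.

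Next, since $v_1, v_2$ are orthonormal in $V$ and $v \in S(V)$, Bessel's inequality gives $|a_1|^2 + |a_2|^2 \leq \|v\|^2 = 1$, and similarly $|b_1|^2 + |b_2|^2 \leq 1$. A direct application of the Cauchy--Schwarz inequality to the vectors $(a_1,a_2), (\overline{b_1}, \overline{b_2}) \in \mathbb{C}^2$ then yields
$$|a_1 b_1 + a_2 b_2| \leq \sqrt{|a_1|^2+|a_2|^2}\,\sqrt{|b_1|^2+|b_2|^2} \leq 1,$$
and dividing by $\sqrt{2}$ gives the claim.

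There is essentially no obstacle here: the statement is purely a linear-algebra fact about rank-one tensors tested against a two-term Schmidt-like combination. The only thing to keep track of is that orthonormality of the pairs $\{v_1,v_2\}$ and $\{w_1,w_2\}$ is used solely through Bessel's inequality, and that the unit-norm hypothesis on $v, w$ is what makes the bound tight (equality being achieved, for instance, when $v=v_1$, $w=w_1$).
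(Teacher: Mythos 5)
Your proof is correct and follows essentially the same route as the paper: expand the inner product using $\langle x_1\otimes y_1, x_2\otimes y_2\rangle=\langle x_1,x_2\rangle\langle y_1,y_2\rangle$ and bound $|a_1b_1+a_2b_2|$ by $1$ via the coefficient bounds on $v$ and $w$. If anything, your explicit invocation of Bessel and Cauchy--Schwarz spells out the step the paper leaves as a ``direct computation.''
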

\begin{proof} Complete $v_1,v_2$ to an orthonormal basis of $V$, and similarly for $w_1,w_2,W$. Write now
$$v=x_1v_1+x_2 v_2+\ldots\in S(V),\qquad w=y_1w_2+y_2 w_2+\ldots \in S(V_2).$$
It holds $x_1^2+x_2^2\leq 1$, as well as  $y_1^2+y_2^2\leq 1$. A direct computation then shows
\begin{eqnarray*}
\left|<v\otimes w,1/\sqrt{2}(v_1\otimes w_1+v_2\otimes w_2)>\right|=1/\sqrt{2}|x_{1}y_{1}+x_{2} y_{2}|\leq1/\sqrt{2}.
\end{eqnarray*}
\end{proof}

We now give a similar estimate for wedge products.
\begin{lemma}\label{l-k2}Let $g\in SU(n)$. Take $X=ge_{1}\wedge ge_{2}\wedge...\wedge
ge_{k}\in S(\Lambda^{k}(\mathbb{C}^{n+1}))$ and $Y=1/\sqrt{2}(e_{1}\wedge e_{2}\wedge
...\wedge e_{k}+e_{k+1}\wedge e_{k+2}\wedge
...\wedge e_{2k})$ in $\Lambda^{k}(\mathbb{C}^n)$ with $1<k\leq \frac{n}{2}$. It then holds
$$\left|<X,Y>\right|\leq 1/\sqrt{2}.$$
\end{lemma}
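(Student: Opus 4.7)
The plan is to identify $\langle X, Y\rangle$ with a sum of two $k\times k$ minors of the unitary matrix $g$, and then to control that sum by combining Cauchy--Binet with Minkowski's determinant inequality. First I would apply the standard formula
$$\langle v_1\wedge\cdots\wedge v_k,\, w_1\wedge\cdots\wedge w_k\rangle=\det\bigl(\langle v_i,w_j\rangle\bigr)$$
to obtain $\langle X,e_1\wedge\cdots\wedge e_k\rangle=\det U_1$ and $\langle X,e_{k+1}\wedge\cdots\wedge e_{2k}\rangle=\det U_2$, where $U_1$ is the top-left $k\times k$ block of $g$ and $U_2$ is the block on rows $k+1,\ldots,2k$ and columns $1,\ldots,k$. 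Setting $a=\det U_1$ and $b=\det U_2$, this gives $\langle X,Y\rangle=(a+b)/\sqrt 2$, so the statement reduces to proving $|a|+|b|\le 1$.

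Next I would use unitarity of $g$. Let $C$ be the $n\times k$ matrix consisting of the first $k$ columns of $g$; since these columns are orthonormal, $C^\ast C=I_k$. Splitting $C$ into three row blocks $U_1,U_2,U_3$ of sizes $k$, $k$, and $n-2k$ respectively yields
$$U_1^\ast U_1+U_2^\ast U_2+U_3^\ast U_3=I_k.$$
Setting $M=U_1^\ast U_1$ and $N=U_2^\ast U_2$, both are positive semidefinite and $M+N\le I_k$, with $|a|^2=\det M$ and $|b|^2=\det N$.

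Now the key ingredient is Minkowski's determinant inequality: for positive semidefinite $k\times k$ matrices $M,N$,
$$\det(M+N)^{1/k}\ge(\det M)^{1/k}+(\det N)^{1/k}.$$
Since $M+N\le I_k$ implies $\det(M+N)\le 1$, this gives $(\det M)^{1/k}+(\det N)^{1/k}\le 1$. Here is where the hypothesis $k\ge 2$ enters: for $x\in[0,1]$ and $k\ge 2$ one has $x^{1/2}\le x^{1/k}$, so
$$|a|+|b|=\sqrt{\det M}+\sqrt{\det N}\le(\det M)^{1/k}+(\det N)^{1/k}\le 1,$$
hence $|\langle X,Y\rangle|\le(|a|+|b|)/\sqrt 2\le 1/\sqrt 2$.

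The main obstacle I anticipate is precisely that last passage from the natural Bessel-type bound $|a|^2+|b|^2\le 1$ (which only yields $|a+b|\le\sqrt 2$, too weak by a factor of $\sqrt 2$) to the sharper $|a|+|b|\le 1$. Minkowski's inequality combined with the exponent conversion $x^{1/2}\le x^{1/k}$ is what closes this gap, and it also explains why the hypothesis $k>1$ cannot be dropped: for $k=1$ one has $x^{1/2}\ge x$ instead, and indeed the choice $g=I$ already gives $|a+b|=1>1/\sqrt 2$ times $\sqrt 2$, i.e.\ violates the desired bound $|\langle X,Y\rangle|\le 1/\sqrt 2$.
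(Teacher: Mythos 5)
Your proof is correct, but the key inequality is different from the paper's. Both arguments begin the same way, reducing the claim to $|\det U_1|+|\det U_2|\le 1$ for the two $k\times k$ blocks built from the first $k$ columns of $g$; the paper, however, only uses that each row of the $k\times 2k$ matrix $[U_1^{\top}\ U_2^{\top}]$ has norm at most $1$, triangularizes both blocks simultaneously by right multiplication with block-diagonal unitaries (a QR-type reduction), and then bounds $\prod_i|A_{ii}|+\prod_i\sqrt{1-|A_{ii}|^2}$ by an elementary scalar optimization, where $k\ge 2$ enters through the value $2^{1-k/2}\le 1$ at the interior critical point. You instead exploit the full column orthonormality $U_1^{\ast}U_1+U_2^{\ast}U_2\preceq I_k$ and invoke Minkowski's determinant inequality, $\det(M+N)^{1/k}\ge(\det M)^{1/k}+(\det N)^{1/k}$ for Hermitian positive semidefinite $M,N$, together with the exponent comparison $x^{1/2}\le x^{1/k}$ on $[0,1]$. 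Your route is shorter and makes the role of $k\ge 2$ completely transparent, at the price of citing a nontrivial classical inequality (and of needing it in the semidefinite case, which follows from the definite case by continuity); the paper's route is more self-contained and in fact proves the bound under the weaker hypothesis that the rows are merely sub-unit vectors, with no orthogonality used. One small correction to your closing remark: for $k=1$ the choice $g=I$ gives $|\langle X,Y\rangle|=1/\sqrt2$, i.e.\ equality rather than a violation; to actually break the bound you should take $g$ with first column $(e_1+e_2)/\sqrt2$, which yields $|\langle X,Y\rangle|=1$. This does not affect the validity of your proof for $k\ge 2$.
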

\begin{proof} A direct computation shows that $\sqrt{2}\left|<X,Y>\right|=|\mathrm{det}(A)+\mathrm{det}(B)|$ with $A,B$ being $k\times k$ matrices defined by $A_{ij}=<ge_i,e_j>$ and $B_{ij}=<ge_i,e_{j+k}>$. We then aim to prove that $|\mathrm{det}(A)+\mathrm{det}(B)|\leq 1$.

Denote with $C=[A\ B]$ the matrix composed by placing side by side columns of $A$ and $B$. Since $|ge_i|=1$, then all rows of $C$ are vectors in $\mathbb{C}^{2k}$ with norm less or equal than 1. Consider a block matrix $$\left(\begin{array}{cc}J_1& 0\\0& \mathrm{Id}\end{array}\right),$$ where each block has dimension $k\times k$ and $J\in SU(k)$. It is clear that the induced change of coordinates in $\mathbb{C}^{2k}$ keeps invariant both the length of vectors and the determinants of $A,B$. The same holds for $$\left(\begin{array}{cc}\mathrm{Id}& 0\\0& J_2\end{array}\right).$$

By the Schur decomposition, choosing $J_1,J_2$ properly, we are reduced to the case in which $A,B$ are upper triangular and such that each row of $C$ has norm less or equal than 1. For each $i=1,\ldots, k$ it then holds $|B_{ii}|\leq \sqrt{1- |A_{ii}|^2}$ and $A_{ii}\in[-1,1]$. Thus
\begin{eqnarray*}
&&|\mathrm{det}(A)+\mathrm{det}(B)|\leq |\mathrm{det}(A)|+|\mathrm{det}(B)|\leq \Pi_{i=1}^k |A_{ii}|+\Pi_{i=1}^k \sqrt{1- |A_{ii}|^2}.
\end{eqnarray*}
Since it holds $0\leq | A_{ii}|\leq 1$, maxima and minima of $\Pi_{i=1}^k |A_{ii}|+\Pi_{i=1}^k\sqrt{1- |A_{ii}|^2}$ are reached either on boundaries, in which it takes values in $[0,1]$, or when it holds $|A_{ii}|=\frac{\sqrt{2}}{2}$ for all $i=1,\ldots,k$, thus taking the value $2^{1-\frac{k}2}$. Then, we have the result for $k\geq 2$.
\end{proof}

We end this appendix with a lemma about lowest weight vectors in Cartan products of representations.
\begin{lemma}\label{l-fuchs} Let $\Phi$ be the Cartan product of  $\Phi_{1}$ and $\Phi_{2}$, each acting on the Hermitian space $V,W$, respectively.
Let $l_{1},l_{2}$ be two unit lowest weight vectors of $\Phi_{1}$ and
$\Phi_{2}$ respectively. Let $h_{1},h_{2}$ be two unit highest weight vectors
of $\Phi_{1}$ and $\Phi_{2}$ respectively.

Then, both $h_{1}\otimes h_{2}$ and $l_{1}\otimes l_{2}$ are in the same irreducible component of $\Phi_{1}
\otimes\Phi_{2}$ corresponding to $\Phi$.
\end{lemma}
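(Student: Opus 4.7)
My plan is to leverage the fact that the Cartan product $\Phi$ is, by definition, the irreducible summand of $\Phi_1\otimes\Phi_2$ with highest weight $\lambda:=\lambda_1+\lambda_2$, the sum of the highest weights of the two factors, and that this weight has multiplicity one in $\Phi_1\otimes\Phi_2$. The multiplicity statement follows from a standard weight computation: any weight of $\Phi_1\otimes\Phi_2$ has the form $\nu_1+\nu_2$ with $\nu_i\leq\lambda_i$ in the dominance order, and equality $\nu_1+\nu_2=\lambda$ forces $\nu_i=\lambda_i$; each $\lambda_i$ has multiplicity one in $\Phi_i$, so $\lambda$ has multiplicity one in the tensor product.

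First I would check the statement for $h_1\otimes h_2$. This vector has weight $\lambda$ and is annihilated by the raising subalgebra $\mathfrak{n}^+$, since $X(h_1\otimes h_2)=(Xh_1)\otimes h_2+h_1\otimes(Xh_2)=0$ for every $X\in\mathfrak{n}^+$. It is therefore a highest weight vector in $\Phi_1\otimes\Phi_2$, and by multiplicity one it must lie in the unique summand of highest weight $\lambda$, namely $\Phi$.

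For $l_1\otimes l_2$ I would run the symmetric argument with the lowering subalgebra $\mathfrak{n}^-$ in place of $\mathfrak{n}^+$. By the same Leibniz computation, $l_1\otimes l_2$ is annihilated by $\mathfrak{n}^-$, hence it is a lowest weight vector of $\Phi_1\otimes\Phi_2$ of weight $w_0\lambda_1+w_0\lambda_2=w_0\lambda$, where $w_0$ denotes the longest element of the Weyl group. Decomposing $\Phi_1\otimes\Phi_2=\bigoplus_i V_{\mu_i}$ into irreducibles and projecting $l_1\otimes l_2$ onto each summand, each nonzero projection is itself a lowest weight vector of the corresponding $V_{\mu_i}$, because the projections are $G$-equivariant and hence commute with being killed by $\mathfrak{n}^-$. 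Such a lowest weight vector has weight $w_0\mu_i$, so equating weights forces $w_0\mu_i=w_0\lambda$, i.e.\ $\mu_i=\lambda$. Multiplicity one then leaves $\Phi$ as the only surviving summand, so $l_1\otimes l_2\in\Phi$.

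The proof reduces to two clean applications of the highest/lowest-weight formalism together with complete reducibility of representations of compact Lie groups. The only step that requires a little care is the multiplicity-one statement, but this is standard and essentially self-evident once written out, so no genuine obstacle arises.
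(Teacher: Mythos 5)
Your proof is correct, and it reaches the same two pillars the paper rests on (the longest Weyl element $w_0$ carrying the highest weight $\lambda=\lambda_1+\lambda_2$ to the lowest weight, and a multiplicity-one statement), but the mechanism is genuinely different. The paper handles $h_1\otimes h_2$ by simply invoking the definition of the Cartan product, and for $l_1\otimes l_2$ it argues purely with weight combinatorics: the weight system of the subrepresentation $\Phi$ is stable under the Weyl group, so $w_0(\lambda_M^1+\lambda_M^2)=\lambda_m^1+\lambda_m^2$ is a weight of $\Phi$; since the corresponding weight space of $\Phi_1\otimes\Phi_2$ is one-dimensional and spanned by $l_1\otimes l_2$ (a fact the paper asserts rather than proves), that vector must lie in $\Phi$. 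You instead work at the Lie algebra level: you characterize $h_1\otimes h_2$ and $l_1\otimes l_2$ by annihilation under $\mathfrak{n}^+$ and $\mathfrak{n}^-$ respectively, project onto the irreducible summands using $G$-equivariance, and conclude that every summand receiving a nonzero projection has highest weight $\lambda$, which together with your (proved) multiplicity-one claim for $\lambda$ pins down the Cartan component. Your route is somewhat more self-contained — you only need multiplicity one of the \emph{highest} weight, which you establish via the dominance-order argument, whereas the paper implicitly uses the symmetric statement for the lowest weight without proof — while the paper's route avoids any $\mathfrak{n}^\pm$ computation and projection argument by exploiting Weyl-invariance of the weights of $\Phi$ directly. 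Both arguments are standard highest-weight theory and both are sound.
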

\begin{proof} The fact that $h_{1}\otimes h_{2}$ is in the irreducible component corresponding to $\Phi$ comes from the definition of Cartan product. We now prove that $l_{1}\otimes l_{2}$ is in the same component.

Consider the opposition element $w_0$ of the Weyl group of $G$, i.e. the element that is the longest among maximally reduced words $w$ in terms of elementary reflections. The operator $w_0$ maps the maximum weight $\lambda_M$ of a representation $\Phi$ to $w_0(\lambda_M)$ being the maximum weight of the contragredient representation. As a consequence, \FR{$-w_0(\lambda_M)$ is the  highest weight of the contragredient representation and} $w_0(\lambda_M)=\lambda_m$ is the minimum weight of $\Phi$, see \cite{fuchs}.

Recall that weights of a representation are stable under the (linear) action of the Weyl group. Thus, given two representations $\Phi_{1},\Phi_{2}$ the corresponding maximal weights $\lambda_M^1,\lambda_M^2$ and minimal weights $\lambda_m^1,\lambda_m^2$, it holds
 $$w_0(\lambda_M^1+\lambda_M^2)=w_0(\lambda_M^1)+w_0(\lambda_M^2)=\lambda_m^1+\lambda_m^2.$$
Thus implies that $\lambda_m^1+\lambda_m^2$ is a weight of the Cartan product $\Phi$. Since $l_1\otimes l_2$ is the only weight vector associated to such weight, then it is a weight vector of $\Phi$.
\end{proof}

 \section*{Acknowledgements}
 \FR{The authors thank Andrei Agrachev for fruitful discussions. They also thank the anonymous reviewer for many useful suggestions.}

\bibliography{biblio}
\end{document}